\documentclass[11pt,letterpaper,reqno]{amsart}

\usepackage{tikz-cd}
\usepackage[T1]{fontenc}
\usepackage{amsmath,amssymb,amsfonts,amsthm}
\usepackage{mathtools}
\usepackage{aliascnt}
\usepackage{microtype}
\usepackage{enumitem}
\usepackage{booktabs}
\usepackage{xcolor}
\usepackage{doi}
\usepackage{tikz}
\usetikzlibrary{positioning,arrows.meta}
\usepackage{hyperref}
\usepackage{bookmark}
\usepackage[capitalize,noabbrev]{cleveref}

\newtheorem{theorem}{Theorem}[subsection]

\newaliascnt{lemma}{theorem}
\newtheorem{lemma}[lemma]{Lemma}
\aliascntresetthe{lemma}

\newaliascnt{proposition}{theorem}
\newtheorem{proposition}[proposition]{Proposition}
\aliascntresetthe{proposition}

\newaliascnt{corollary}{theorem}

\aliascntresetthe{corollary}

\newaliascnt{problem}{theorem}
\newtheorem{problem}[problem]{Problem}
\aliascntresetthe{problem}

\theoremstyle{definition}
\newaliascnt{definition}{theorem}

\aliascntresetthe{definition}

\newaliascnt{example}{theorem}

\aliascntresetthe{example}

\newaliascnt{remark}{theorem}

\aliascntresetthe{remark}

\crefname{theorem}{Theorem}{Theorems}
\Crefname{theorem}{Theorem}{Theorems}
\crefname{lemma}{Lemma}{Lemmas}
\Crefname{lemma}{Lemma}{Lemmas}
\crefname{proposition}{Proposition}{Propositions}
\Crefname{proposition}{Proposition}{Propositions}
\crefname{corollary}{Corollary}{Corollaries}
\Crefname{corollary}{Corollary}{Corollaries}
\crefname{problem}{Problem}{Problems}
\Crefname{problem}{Problem}{Problems}

\newcommand{\D}{\mathbb D}
\newcommand{\T}{\mathbb T}
\newcommand{\C}{\mathbb C}
\newcommand{\R}{\mathbb R}
\newcommand{\Ftwo}{\mathbb F_2}
\newcommand{\SpecS}{\operatorname{Spec}_{\mathrm{St}}}
\newcommand{\Aut}{\operatorname{Aut}}

\newcommand{\id}{\mathrm{id}}
\newcommand{\dd}{\,\mathrm{d}}
\newcommand{\iu}{\mathrm{i}}

\begin{document}

\title[Steklov-Isospectral Plane Domains]
{Strictly Convex Steklov-Isospectral Plane Domains}

\author[T.~Hu]{Tao Hu}
\address{
Department of Information, Risk, and Operations Management,
McCombs School of Business,
The University of Texas at Austin,
Austin, TX 78712, USA
}
\email{tao\_hu@utexas.edu}

\author[J.~Shi]{Jiachen Shi}
\address{School of Science, China University of Mining and Technology-Beijing, Beijing 100083, P.~R.~China}
\email{anssjc@outlook.com}

\author[Q.~Tang]{Quanyu Tang}
\address{School of Mathematical Sciences, University of Science and Technology of China, Hefei 230026, P.~R.~China}
\email{tangquanyu827@gmail.com}

\begin{abstract}
We construct pairs of noncongruent bounded Euclidean plane domains with identical Steklov spectra, including multiplicities. This gives a negative answer to the planar Steklov analogue of Kac's question ``Can one hear the shape of a drum?'' The domains are simply connected and strictly convex, have real-analytic boundaries, and may be chosen arbitrarily close to a disk in the $C^\infty$ topology.
\end{abstract}

\subjclass[2020]{Primary 58J53; Secondary 58J50, 30C35}

\keywords{Steklov spectrum, isospectral domains, Sunada method,
orbifolds, finite Blaschke products, conformal mappings}

\maketitle

\section{Introduction}

Kac's celebrated question ``Can one hear the shape of a drum?'' is a
foundational problem in inverse spectral geometry \cite{Kac1966}. Gordon, Webb, and Wolpert answered Kac's question negatively by
constructing a pair of nonisometric simply connected plane domains
having both the same Dirichlet spectrum and the same Neumann spectrum,
using Riemannian orbifold coverings
\cite{GordonWebbWolpert1992}. The Steklov problem is of a different
nature: the spectral parameter occurs in the boundary condition rather
than in the interior equation. Throughout the paper, by a domain we mean a connected open set. For a bounded connected plane domain
$\Omega$ with smooth boundary and outward unit normal $\nu$, the
Steklov eigenvalues form the sequence
\[
0=\sigma_0(\Omega)<\sigma_1(\Omega)\leq \sigma_2(\Omega)
\leq\cdots\nearrow\infty
\]
for which
\[
\begin{cases}
\Delta u = 0 & \text{in } \Omega \\ \partial_\nu u = \sigma u & \text{on } \partial\Omega
\end{cases}
\]
has a nonzero solution. Equivalently, they are the eigenvalues of the Dirichlet-to-Neumann
operator, which maps a boundary function to the normal derivative of
its harmonic extension. We write $\SpecS(\Omega)$ for the Steklov
spectrum, counted with multiplicity. General accounts of the problem and its spectral
geometry may be found in
\cite{ColboisGirouardGordonSher2024,GirouardPolterovich2017}.

Conformal mapping methods have long played a central role in the
planar Steklov problem \cite{Dittmar1988}.  Jollivet and Sharafutdinov reformulated the simply connected inverse problem as that of recovering a positive smooth boundary weight on the unit circle, up to the natural action of the conformal automorphism group of the disk \cite{JollivetSharafutdinov2014}.  Nevertheless, the
inverse Steklov problem for planar domains has exhibited substantial
rigidity.  Edward proved that the disk is spectrally determined among smooth bounded simply connected planar domains \cite{Edward1993}. Girouard, Parnovski, Polterovich, and Sher subsequently showed that the Steklov spectrum determines the number and lengths of the boundary components of every smooth compact Riemannian surface with boundary \cite{GirouardParnovskiPolterovichSher2014}. Together with Edward's result, this implies that the disk is spectrally determined among all smooth bounded planar domains.

Jollivet and Sharafutdinov established $C^\infty$-precompactness, after suitable normalization of the conformal parametrizations and modulo Euclidean isometries, for Steklov-isospectral families of smooth bounded simply connected, possibly multisheet, planar domains \cite{JollivetSharafutdinov2018}. The Steklov spectrum determines the side lengths of a rectangle, and every cube is spectrally determined within the class of cuboids~\cite{GirouardLagacePolterovichSavo2019}. For curvilinear polygons with all interior angles in $(0,\pi)$, sharp high-frequency asymptotics and corresponding inverse results under genericity assumptions were established in~\cite{KrymskiLevitinParnovskiPolterovichSher2021,LevitinParnovskiPolterovichSher2022}. More recently, spectral finiteness, meaning finiteness of the isospectral
set up to congruence, was proved for almost all convex polygons, together with spectral determination results for almost all triangles and further inverse spectral results for several special classes of quadrilaterals and regular polygons
\cite{DrydenGordonMorenoRowlettVillegas2025,
DrydenGordonMorenoRowlettVillegas2026}. Each circular annulus has
also been shown to be uniquely determined by its Steklov spectrum
among all planar domains with smooth boundary \cite{JinWang2026}.
Beyond the smooth setting, the Steklov Weyl law remains valid for
surfaces with Lipschitz, and even certain rougher, boundaries
\cite{KarpukhinLagacePolterovich2023}.

There is nevertheless considerable flexibility in limiting constructions. Boundary homogenization can produce domains that remain geometrically close to a prescribed limit while their normalized Steklov eigenvalues converge, for each fixed index, to those of a geometrically different target~\cite{BucurNahon2021,KarpukhinLagace2024}. Exact isospectral constructions are known for Riemannian manifolds and orbifolds with boundary. In particular, Gordon, Herbrich, and Webb adapted Sunada's method \cite{Sunada1985} and the torus-action method to the Steklov setting, obtaining Steklov-isospectral flat surfaces with boundary and planar domains with isospectral sloshing (mixed Steklov--Neumann) problems \cite{GordonHerbrichWebb2021}. In the familiar planar Sunada constructions, however, reflector strata in the quotient orbifolds become portions of the boundary of the underlying planar domains, on which the reflection-invariant and reflection-anti-invariant sectors impose Neumann and Dirichlet conditions, respectively. Thus these constructions yield mixed Steklov--Neumann or Steklov--Dirichlet problems rather than the pure Steklov problem on the entire boundary. This obstruction helps explain why the following question remained open. It was posed explicitly by Girouard and Polterovich \cite[Open Problem~6]{GirouardPolterovich2017}, who expressed the expectation that the answer would be negative, and was later restated in \cite[Open Question~9.11]{ColboisGirouardGordonSher2024}. It has continued to be recorded as open in the literature on inverse Steklov problems for curvilinear and convex polygonal domains \cite{DrydenGordonMorenoRowlettVillegas2025,DrydenGordonMorenoRowlettVillegas2026,KrymskiLevitinParnovskiPolterovichSher2021}.

\subsection{The planar inverse problem and main result}

\begin{problem}
\label{prob:plane-domains}
Do there exist two bounded, connected, noncongruent Euclidean plane
domains with identical Steklov spectra, including multiplicities?
\end{problem}

Contrary to the expectation expressed in
\cite[p.~344]{GirouardPolterovich2017}, our main result answers
\cref{prob:plane-domains} affirmatively and shows that neither
corners nor loss of convexity are needed.  We
identify $\R^2$ with $\C$, write $\iu=\sqrt{-1}$, put
$\D=\{z\in\C:|z|<1\}$ and $\T=\partial\D$, and use congruence to mean
equivalence under a Euclidean rigid motion. For each integer $m\geq0$, we use
\[
\|f\|_{C^m(\overline{\D})}
:=\max_{|\alpha|\leq m}\sup_{z\in\overline{\D}}
|\partial^\alpha f(z)|,
\qquad
\|g\|_{C^m(\T)}
:=\max_{0\leq k\leq m}\sup_{t\in\R}
\left|\frac{d^k}{dt^k}g(e^{\iu t})\right|.
\]
For $Z\in\{\overline{\D},\T\}$, convergence in $C^\infty(Z)$ means
convergence in the $C^m(Z)$ norm for every $m\geq0$.

\begin{theorem}
\label{thm:main}
There exist $\varepsilon_0>0$ and a family
$\{\Omega_{i,\varepsilon}\}_{i\in\{P,L\},\,0<\varepsilon<\varepsilon_0}$
of domains in $\R^2$ such that the following properties hold.
\begin{enumerate}[label=\textup{(\roman*)}]
\item Each domain is simply connected and strictly convex, and its
boundary is a real-analytic Jordan curve.
\item The two domains are Steklov isospectral:
\[
\SpecS\bigl(\Omega_{P,\varepsilon}\bigr)
=
\SpecS\bigl(\Omega_{L,\varepsilon}\bigr),
\]
including multiplicities.
\item If $i,j\in\{P,L\}$ and
$0<\varepsilon,\varepsilon'<\varepsilon_0$ satisfy
$(i,\varepsilon)\neq(j,\varepsilon')$, then
$\Omega_{i,\varepsilon}$ and $\Omega_{j,\varepsilon'}$ are not
congruent. Moreover, none of the domains admits an
orientation-reversing Euclidean symmetry.
\item Each domain has boundary length $2\pi$.
\item For each $i\in\{P,L\}$ and
$0<\varepsilon<\varepsilon_0$, there is a conformal map
$F_{i,\varepsilon}:\D\longrightarrow\Omega_{i,\varepsilon}$
with $F_{i,\varepsilon}(0)=0$ and
$F_{i,\varepsilon}'(0)>0$, extending holomorphically to a neighborhood
of $\overline\D$, such that
\[
F_{i,\varepsilon}\longrightarrow \id
\quad\text{in }C^\infty(\overline\D)
\]
as $\varepsilon\to0$.
\end{enumerate}
\end{theorem}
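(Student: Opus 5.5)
The plan is to reduce the problem to a weighted Steklov problem on the unit disk and produce the two weights by a Sunada construction. The Sunada construction will use branched coverings of the disk by itself, that is, finite Blaschke products, so that no reflector strata appear.

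\emph{Step 1 (reduction to weights).} If $F:\D\to\Omega$ is conformal and extends holomorphically across $\T$, then $u$ is a Steklov eigenfunction on $\Omega$ exactly when $v=u\circ F$ is harmonic in $\D$ and satisfies $\partial_r v=\sigma\,\rho\, v$ on $\T$, with $\rho=|F'|$. Thus $\SpecS(\Omega)$ is the spectrum of the pencil $\Lambda v=\sigma\rho v$, where $\Lambda$ is the Dirichlet-to-Neumann map of the disk (the Fourier multiplier $|n|$).

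Conversely, a positive real-analytic weight $\rho$ on $\T$ determines a map $F$ with $F'=\exp(h+\iu\tilde h)$ and $F(0)=0$, where $h$ is the harmonic extension of $\log\rho$ and $\tilde h$ its harmonic conjugate normalized by $\tilde h(0)=0$. This $F$ satisfies $|F'|=\rho$ on $\T$ and $F'(0)>0$. If $\rho\to1$ in $C^\infty(\T)$, then $F\to\id$ in $C^\infty(\overline\D)$. Hence, for $\rho$ close to $1$, $F$ is univalent and $\Omega=F(\D)$ is strictly convex with real-analytic boundary, since its curvature is close to $1$. The boundary length is $\int_\T\rho$, which gives (iv) after the normalization $\int_\T\rho=2\pi$. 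So (i), (iv) and (v) reduce to producing two weights $\rho_{P,\varepsilon},\rho_{L,\varepsilon}\to1$ with isospectral pencils.

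\emph{Step 2 (Sunada via Blaschke products).} Take $G=\mathrm{GL}(3,\Ftwo)$ acting on the points and on the lines of the Fano plane, with $H_P,H_L$ the two nonconjugate index-$7$ stabilizers. This is the classical almost-conjugate (Sunada) pair, and the two degree-$7$ permutation representations have equal characters.

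Choose transvections $t_1,t_2,t_3\in G$ that generate $G$ and whose product $c=t_1t_2t_3$ has order $7$; I expect to verify the existence of such a triple by a finite search. In both actions each $t_j$ has cycle type $2^21^3$ and $c$ acts as a $7$-cycle.

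Choose branch values $w_1,w_2,w_3\in\D$ at distance about $\varepsilon$ from $0$. By the Riemann existence theorem there are degree-$7$ branched covers of $\D$ with monodromy given by the point action and by the line action, respectively. By Riemann--Hurwitz the total space has Euler characteristic $7-6=1$, and the boundary monodromy $c$ is transitive, so the total space is a disk with one boundary circle. Uniformizing, each cover becomes a proper holomorphic self-map of $\D$, i.e.\ a Blaschke product $B_{i,\varepsilon}$ of degree $7$ with no critical points on $\T$. Both covers are quotients $Y/H_i$ of the common Galois cover $Y\to\D$ with group $G$.

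Put $\rho_{i,\varepsilon}=|B_{i,\varepsilon}'|/7$ on $\T$; this is the pullback of the density $|dz|$ from the base. The Sunada argument then goes through verbatim for weighted Dirichlet-to-Neumann operators. Harmonicity is conformally invariant, the weighted boundary condition pulls back naturally, and $G$-invariant boundary data on $Y$ have eigenspaces decomposing according to the permutation characters, which are equal. Hence the two pencils are isospectral with multiplicities, which gives (ii).

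As $\varepsilon\to0$ the branch values collapse to $0$, so after normalizing $B_{i,\varepsilon}(0)=0$ and the rotation appropriately, $B_{i,\varepsilon}\to z^7$ uniformly on a neighborhood of $\overline\D$. Consequently $\rho_{i,\varepsilon}\to1$ in $C^\infty(\T)$, and Step 1 applies.

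\emph{Step 3 (noncongruence, and the main obstacle).} The delicate part is (iii). Congruence of $\Omega_{i,\varepsilon}$ and $\Omega_{j,\varepsilon'}$, given the normalization of $F$, amounts to $\rho_{i,\varepsilon}$ and $\rho_{j,\varepsilon'}$ differing by a disk automorphism acting as a density, and a reflection symmetry amounts to invariance under such a map composed with conjugation. I would expand $B_{i,\varepsilon}$ perturbatively in $\varepsilon$, with branch values $w_k=\varepsilon a_k$ for a fixed generic choice of the $a_k$. The leading Fourier coefficients of $\log\rho_{i,\varepsilon}$ then appear at order $\varepsilon^{\alpha}$ with explicit coefficients depending on the monodromy data. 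Comparing these leading terms should separate $P$ from $L$ and distinguish different values of $\varepsilon$, for instance through a monotone invariant such as the size of the first nonvanishing Fourier mode. The same comparison should rule out orientation-reversing symmetries, by choosing the $a_k$ so that the leading term is not invariant under any reflection.

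The risk is that the leading-order terms coincide for $P$ and $L$, since the two actions are character-equivalent; one may then need to go to higher order or use the genuinely different cycle structures of the $t_j$ on points versus lines. I would try first a direct computation with explicitly chosen $a_k$, shrinking $\varepsilon_0$ as needed.
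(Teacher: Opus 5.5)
Your Steps~1 and~2 are sound and follow the paper closely. Two things differ, and both are harmless: you use three transvections with a $7$-cycle product instead of a $(2,3,7)$ pair, and you work with the conformally invariant weighted Dirichlet-to-Neumann pencil on the Galois closure instead of orbifolds plus cone-point removal. Your claim $B_{i,\varepsilon}\to z^7$ still needs the paper's justification: write the cover as a rescaled polynomial and apply the Carath\'eodory kernel theorem.

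The genuine gap is Step~3. Item (iii) is not proved, and the perturbative program is unlikely to prove it.
\begin{itemize}
\item A congruence corresponds to $\rho_{i,\varepsilon}=(\rho_{j,\varepsilon'}\circ\varphi)\,|\varphi'|$ on $\T$ for an arbitrary $\varphi\in\Aut(\D)$. So Fourier coefficients of $\log\rho$ are not congruence invariants.
\item Their moduli are blind to orientation.
\item Separating every pair $\varepsilon\neq\varepsilon'$ needs strict monotonicity of a genuine invariant, not just an asymptotic.
\item Suppose the Riemann map of the rescaled preimage fixes $0$ and the polynomial is centered. Then the first correction is a single mode: $\rho_{i,\varepsilon}=1-\tfrac47\operatorname{Re}(b_5e^{-2\iu t})+O(\varepsilon^{3/7})$, where $b_5=O(\varepsilon^{2/7})$ is the $z^5$ coefficient of the rescaled polynomial. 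A single mode is invariant under some reflection $t\mapsto 2\theta-t$ whatever the $a_k$ are, so reflections cannot be excluded at leading order.
\item Whether $|b_5|$ differs for $P$ and $L$ is exactly the risk you flag. Coincidences of this kind really occur. For the $(2,3,7)$ data with real branch values, the two plane trees with passport $[2^21^3,3^21,7]$ are mirror images. Hence the point and line polynomials are complex conjugate, and the resulting domains would be reflections of each other. This is why the paper takes the ratio of its branch values nonreal.
\end{itemize}

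The missing idea is a rigidity statement that exploits the fact that your weights are $|B'|/7$ for finite Blaschke products $B$.
\begin{itemize}
\item For such $B$, and for $B\circ\varphi$ with $\varphi\in\Aut(\D)$, the boundary argument is increasing. Hence $|B'(e^{\iu t})|=\tfrac{d}{dt}\arg B(e^{\iu t})$.
\item A congruence gives $|B_{i,\varepsilon}'|=|(B_{j,\varepsilon'}\circ\varphi)'|$ on $\T$. Integrating, $B_{i,\varepsilon}=\eta\,B_{j,\varepsilon'}\circ\varphi$ on $\T$, hence on $\D$, for some $\eta\in\T$. So the congruence becomes an isomorphism of branched covers over the rotation $w\mapsto\eta w$.
\item Comparing critical values gives $\{\varepsilon a_k\}=\{\eta\varepsilon' a_k\}$. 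If $|a_1|<|a_2|<|a_3|$, this forces $\varepsilon=\varepsilon'$ and $\eta=1$.
\item Then $B_{i,\varepsilon}=B_{j,\varepsilon}\circ\varphi$ is an equivalence of covers over the identity. Since the $t_k$ generate $G$, the monodromy $G$-sets $G/H_i$ and $G/H_j$ are isomorphic, i.e.\ $H_i$ and $H_j$ are conjugate. This forces $i=j$.
\item An orientation-reversing congruence instead gives $B_{i,\varepsilon}=\eta\,\overline{B_{j,\varepsilon'}\circ\varphi}$ with $\varphi$ antiholomorphic. The right-hand side has critical values $\{\eta\varepsilon'\overline{a_k}\}$. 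Matching the two smallest gives $a_2/a_1\in\R$, which you rule out by choosing $a_2/a_1\notin\R$.
\end{itemize}
This is the paper's argument, which it runs with the two branch values $\varepsilon$ and $(2+\iu)\varepsilon$. It settles (iii) exactly, with no expansion in $\varepsilon$.
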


Thus, \cref{thm:main} gives a negative answer to the planar Steklov
analogue of Kac's question ``Can one hear the shape of a drum?''
The Steklov spectrum does not determine a bounded Euclidean plane
domain up to congruence, even within the class of simply connected,
strictly convex domains with real-analytic boundary.

\subsection{Proof strategy}

The proof combines several classical ingredients with two key
mechanisms that are new to the present construction.  We first
separate the standard input from the new parts of the argument.

The group-theoretic input is classical.  We use the degree-seven
point--line Gassmann pair for
$\operatorname{GL}(3,\Ftwo)\cong\operatorname{PSL}_2(7)$, together
with its classical $(2,3,7)$-generation; see
\cref{lem:group-input}.  The Riemann existence theorem then converts
the resulting permutation data into branched covers of the Riemann
sphere.  Likewise, the isospectral step is based on the Sunada
principle in the Steklov setting
\cite{ColboisGirouardGordonSher2024,GordonHerbrichWebb2021}, and the
analytic part of the argument uses standard facts from two-dimensional
conformal geometry, finite Blaschke products, and outer functions.

The first new ingredient is a way to implement the classical Sunada
data without introducing reflector strata, while at the same time
forcing the relevant quotients to have disk topology. From the permutation
triple we construct two degree-$n$ polynomial covers of the disk with
the same two interior branch values. The cycle structure of the permutation data is chosen so that the corresponding bordered covers have Euler characteristic one, and hence disk topology. We regard the disk as an
orbifold having cone points precisely at these branch values and pass
to a common regular orbifold cover. The corresponding intermediate
quotients are Steklov isospectral by the Steklov version of Sunada's
argument \cite[Theorem~2.3 and Remark~2.6]{GordonHerbrichWebb2021}.  The orientation of the base orbifold lifts to the common regular
cover and is preserved by all deck transformations, so the local
isotropy groups in the intermediate quotients act by rotations.
Moreover, the action is free near the boundary.  Thus the quotient
singularities are interior cone points and no reflector strata occur.  This is the point
at which the construction differs from the familiar planar Sunada
constructions, where reflector strata lead naturally to mixed
Steklov--Neumann or Steklov--Dirichlet boundary conditions rather than
to the pure Steklov problem.

Gordon, Herbrich, and Webb adapted Sunada's technique to the Steklov setting and showed that, for the planar domains arising from the classical Gordon--Webb--Wolpert construction, their argument yields isospectral sloshing problems when the boundary density is taken to vanish on the straight boundary segments corresponding to reflector edges \cite{GordonHerbrichWebb2021}.  Within this Sunada framework, passing from this mixed setting to the pure Steklov problem requires a reflector-free realization of the Sunada mechanism that still produces quotients of disk topology and can ultimately be realized as genuine Euclidean plane domains.  Thus avoiding reflectors alone is not sufficient: this must be achieved simultaneously with the topological and analytic requirements needed to recover the original planar Steklov problem.

The second new mechanism addresses a different obstruction: a weighted
Steklov problem on the disk does not automatically come from the
ordinary Steklov problem on a Euclidean plane domain.  Indeed, although
a positive boundary weight can be realized as the modulus of an outer
function, the primitive of that outer function need not be univalent.
After uniformizing the underlying quotient surfaces by the disk, the
Steklov spectra become weighted Steklov spectra
$\SpecS(\D,\rho_{j,\varepsilon})$, where
$\rho_{j,\varepsilon}$ is the boundary derivative modulus of a finite
Blaschke product.  We overcome the univalence obstruction by arranging
the two branch values to coalesce as $\varepsilon\to0$.  After a
suitable normalization, the corresponding Blaschke products converge
smoothly to the monomial $z^n$, and hence
$\rho_{j,\varepsilon}\to n$ in $C^\infty(\T)$.  For a positive
real-analytic weight sufficiently close to a constant, its normalized
outer function has positive real part, so its primitive is univalent
and realizes the weighted Steklov problem as the ordinary Steklov
problem on a Euclidean plane domain.  The resulting domains converge
smoothly, after rescaling, to the unit disk, which yields strict
convexity for sufficiently small $\varepsilon$.

Finally, we prove that the construction does not collapse under
Euclidean congruence.  A Euclidean congruence between two of the
resulting domains preserves boundary arc length and therefore forces
the boundary derivative moduli of the associated finite Blaschke
products to agree after reparametrization by the induced holomorphic
or antiholomorphic disk automorphism.  The boundary rigidity lemma
then yields a corresponding relation between the two Blaschke
products.  In the orientation-preserving case, comparison of the two
critical-value sets first forces the parameters to agree and eliminates
the residual target rotation.  If the two domains arise from different
Sunada quotients, the resulting relation is then an equivalence of the
underlying branched covers and would force the two almost-conjugate
subgroups to be conjugate.  The orientation-reversing case is excluded
by the asymmetric choice of the two branch values: their ratio is
nonreal and has modulus different from one.  This proves the pairwise
noncongruence assertion and, at the same time, excludes
orientation-reversing Euclidean symmetries.

\subsection{Organization of the paper}

In \cref{sec:analytic-preliminaries}, we collect the analytic tools
needed for the construction.  In \cref{sec:proof-main}, we establish
the group-theoretic input, construct the corresponding polynomial
covers and Sunada quotients, and complete the proof of
\cref{thm:main}.

\section{Analytic preliminaries}
\label{sec:analytic-preliminaries}

\subsection{Weighted Steklov spectra and conformal pullback}

Let $\rho$ be a positive smooth function on $\T$.  We denote by
$\SpecS(\D,\rho)$ the variational spectrum associated with the
Rayleigh quotient
\[
\mathcal R_\rho(u)
=
\frac{\displaystyle\int_\D |\nabla u|^2\dd A}
{\displaystyle\int_0^{2\pi}|u(e^{\iu t})|^2
\rho(e^{\iu t})\dd t},
\qquad
u\in H^1(\D),\quad u|_\T\not\equiv0.
\]
Equivalently, it is the spectrum of the weighted Steklov problem
\[
\Delta u=0\quad\text{in }\D,
\qquad
\partial_\nu u=\sigma\rho u\quad\text{on }\T;
\]
see, for example, \cite[Section~1]{KaoMohammadi2026}
for the corresponding weak and variational formulations.
The compactness of the trace embedding
$H^1(\D)\hookrightarrow L^2(\T)$ gives a discrete spectrum, counted
with multiplicity.

\begin{proposition}
\label{prop:conformal-pullback}
Let $F:\D\to\Omega$ be a conformal bijection onto a bounded simply
connected plane domain.  Suppose that $F$ extends to a $C^1$ diffeomorphism
$\overline\D\to\overline\Omega$ and that $F'$ does not vanish on $\T$. Then
\[
\SpecS(\Omega)=\SpecS(\D,|F'|_{\T}).
\]
\end{proposition}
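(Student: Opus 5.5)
The plan is to show that composition with $F$ identifies the two Rayleigh quotients, and then to read off equality of the spectra from the min–max characterization. Given $v\in H^1(\Omega)$, set $u=v\circ F$.

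The Dirichlet integral is conformally invariant in two dimensions. Since $F$ is holomorphic with Jacobian $|F'|^2$, for $u=v\circ F$ we have
\[
|\nabla u(z)|^2=|F'(z)|^2\,|\nabla v(F(z))|^2 .
\]
The change of variables $w=F(z)$ therefore gives
\[
\int_\D|\nabla u|^2\dd A=\int_\Omega|\nabla v|^2\dd A .
\]
For the boundary term, $F$ extends to a $C^1$ diffeomorphism $\overline\D\to\overline\Omega$. Hence $t\mapsto F(e^{\iu t})$ is a $C^1$ parametrization of $\partial\Omega$ with speed $|F'(e^{\iu t})|$, which is nonvanishing by hypothesis. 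Consequently
\[
\int_{\partial\Omega}|v|^2\dd s=\int_0^{2\pi}|u(e^{\iu t})|^2\,|F'(e^{\iu t})|\dd t .
\]
We also need $\operatorname{tr}(v\circ F)=(\operatorname{tr}v)\circ F$. This holds for $v\in C^1(\overline\Omega)$ and extends by density and continuity of the trace.

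Next we check that $v\mapsto v\circ F$ is a linear bijection $H^1(\Omega)\to H^1(\D)$ with inverse $u\mapsto u\circ F^{-1}$. Because $F$ is a $C^1$ diffeomorphism of the closures, $F$ and $F^{-1}$ have bounded derivatives on the compact sets $\overline\D$ and $\overline\Omega$. Composition with a bi-Lipschitz $C^1$ map preserves $H^1$, by the chain rule for Sobolev functions together with the bounded Jacobian in the $L^2$ part. The same bijection preserves the condition that the boundary trace is not identically zero, by the trace identity above. We therefore get $\mathcal R_{|F'|}(v\circ F)=\mathcal R_\Omega(v)$, where $\mathcal R_\Omega$ denotes the usual Steklov Rayleigh quotient on $\Omega$. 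The linear bijection also maps $k$-dimensional subspaces (modulo functions with zero trace) onto $k$-dimensional subspaces.

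By the min–max principle, both $\SpecS(\Omega)$ and $\SpecS(\D,|F'|_\T)$ are given by
\[
\sigma_k=\inf_{\dim E=k+1}\ \sup_{u\in E,\ u|_{\text{bdry}}\neq0}\mathcal R(u),
\]
where the infimum is over subspaces on which the trace map is injective. The identification then gives equality term by term, including multiplicities. For $\Omega$ this variational spectrum coincides with the classical Steklov spectrum; this is standard since $\partial\Omega$ is $C^1$ and the trace is compact.

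The only delicate point is regularity. The hypotheses give only a $C^1$ boundary, so I would avoid any pointwise use of normal derivatives and work entirely at the weak/variational level, where everything above is justified.
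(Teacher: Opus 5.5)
Your proposal is correct and follows the paper's route. You pull back by $F$, use conformal invariance of the Dirichlet integral together with the boundary change of variables $\dd s=|F'(e^{\iu t})|\dd t$, and conclude by the min--max characterization. You also spell out the $H^1$-isomorphism and trace-compatibility details that the paper leaves implicit; these are correct and fill in what the paper's short proof takes for granted.
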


\begin{proof}
For $v\in H^1(\Omega)$, set $u=v\circ F$.  Conformal invariance of
the Dirichlet integral and the boundary change of variables give
\[
\int_\Omega |\nabla v|^2\dd A
=
\int_\D |\nabla u|^2\dd A
\]
and
\[
\int_{\partial\Omega}|v|^2\dd s
=
\int_0^{2\pi}|u(e^{\iu t})|^2|F'(e^{\iu t})|\dd t.
\]
The conclusion follows from the min--max characterization; see, for instance, \cite[Theorem~7.1.9]{LMP23} and \cite[Eq.~(4.1.2)]{GirouardPolterovich2017}.
\end{proof}

\subsection{Removal of isolated interior cone points}

We use the variational definition of the Steklov spectrum on a compact
Riemannian orbisurface, as introduced in
\cite{AriasMarcoDrydenGordonHassannezhadRayStanhope2019}.
Isolated interior conical singularities may be removed by a conformal
change of metric that is trivial on the boundary, without changing the
Steklov spectrum; see \cite[Definition~2.6 and Remark~2.10]{ColboisGirouardGordonSher2024}.
In particular, this applies to interior orbifold cone points; see also
the discussion following
\cite[Corollary~8.20]{ColboisGirouardGordonSher2024}.
The next proposition records the resulting weighted-disk formulation
needed below.

\begin{proposition}
\label{prop:cone-removal}
Let $X$ be a compact orientable Riemannian orbisurface whose underlying
topological surface $|X|$ is a disk and whose singular set consists of
finitely many interior cone points. Let $\psi:\D\longrightarrow |X|^\circ$ be a conformal parametrization of the underlying Riemann surface,
extending smoothly to the boundary.  If the boundary line element of
the orbifold metric pulls back to
$\rho(e^{\iu t})\dd t$, where $\rho>0$, then
\[
\SpecS(X)=\SpecS(\D,\rho).
\]
\end{proposition}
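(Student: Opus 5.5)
The plan is to reduce the orbifold Steklov spectrum to a weighted disk spectrum in two moves. First, the cone points are removed by a conformal change of metric that is trivial on the boundary. Second, we pull back along $\psi$ and apply conformal invariance of the Dirichlet integral, exactly as in the proof of \cref{prop:conformal-pullback}.

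Concretely, let $g$ be the orbifold metric on $X$, and let $p_1,\dots,p_k\in|X|^\circ$ be the cone points. Following \cite[Definition~2.6 and Remark~2.10]{ColboisGirouardGordonSher2024}, we use the variational definition of $\SpecS(X)$ from \cite{AriasMarcoDrydenGordonHassannezhadRayStanhope2019}. The Rayleigh quotient is
\[
\frac{\int_X|\nabla u|_g^2\dd A_g}{\int_{\partial X}u^2\dd s_g},
\]
taken over the orbifold Sobolev space $H^1(X)$.

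The key observations are as follows.

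First, in dimension two the Dirichlet energy $\int|\nabla u|_g^2\dd A_g$ depends only on the conformal class of $g$. Near a cone point $p_j$ of order $m_j$, a local orbifold chart is a disk modulo rotations by $\mathbb Z/m_j$. Since $X$ is orientable, no reflections occur. Hence $|X|$ carries a genuine Riemann surface structure near $p_j$, with coordinate $w=z^{m_j}$ in terms of the chart coordinate $z$. An invariant $H^1$ function on the chart disk descends to a function of $w$, and its Dirichlet energy upstairs is $m_j$ times the energy downstairs. The orbifold integral normalizes by $1/m_j$, so the orbifold energy equals the ordinary Dirichlet energy on $|X|$ in the $w$-coordinate.

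Second, the map $u\mapsto u$ identifies $H^1(X)$ with $H^1(|X|)$ for any smooth conformal metric on $|X|^\circ$ that is smooth up to the boundary. Points have zero capacity in dimension two, so finite energy near a point imposes no extra condition, and the $L^2$ parts are comparable. Because the action is free near the boundary, the boundary integrals coincide.

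Third, $\psi$ is a conformal diffeomorphism $\D\to|X|^\circ$ extending smoothly to the boundary. Composition $u\mapsto u\circ\psi$ therefore preserves the Dirichlet integral and maps $H^1(|X|)$ onto $H^1(\D)$. By hypothesis, the boundary line element pulls back to $\rho(e^{\iu t})\dd t$, so $\int_{\partial X}u^2\dd s_g=\int_0^{2\pi}|u\circ\psi|^2\rho\dd t$.

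With these pieces in place, the two Rayleigh quotients agree under a bijection of the form domains. The min--max characterization, as in \cref{prop:conformal-pullback}, then gives $\SpecS(X)=\SpecS(\D,\rho)$ with multiplicities.

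The main obstacle is the local identification at the cone points: showing that orbifold $H^1$ functions correspond exactly to $H^1$ functions on $|X|$, with equal energy. Here we will rely on the cited remark in \cite{ColboisGirouardGordonSher2024} rather than redo the capacity argument. If a self-contained check is needed, we will cut out small disks around each $p_j$, use the conformal invariance computation above on each annulus, and pass to the limit using the fact that a point has zero capacity in two dimensions.
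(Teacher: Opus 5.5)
Your proposal is correct and follows the same route as the paper. Both remove the interior cone points via the conformal-removal result \cite[Remark~2.10]{ColboisGirouardGordonSher2024}, then pull back along $\psi$ and conclude by conformal invariance of the Dirichlet energy and the min--max characterization. Your explicit computation in the coordinate $w=z^{m_j}$ and the zero-capacity remark only spell out what the paper leaves to that citation.
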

\begin{proof}
By the conformal removal result for isolated interior conical
singularities
\cite[Remark~2.10]{ColboisGirouardGordonSher2024}, the orbifold
metric on $X$ is conformally equivalent, away from the cone points,
to a smooth metric $\widehat g$ on $|X|$, with conformal factor equal
to $1$ on the boundary, and this modification preserves the Steklov
spectrum. Hence $\SpecS(X)=\SpecS(|X|,\widehat g)$.
Pulling $\widehat g$ back by $\psi$, we obtain a smooth metric
conformal to the Euclidean metric on $\D$, with boundary line element
$\rho(e^{\iu t})\dd t$.  Conformal invariance of the Dirichlet energy
in dimension two and the min--max characterization therefore give $\SpecS(|X|,\widehat g)=\SpecS(\D,\rho)$.
\end{proof}

\subsection{Realization of weights near a constant}

The next proposition is the analytic mechanism that turns the
weighted spectra produced below into spectra of Euclidean domains.

\begin{proposition}
\label{prop:outer-realization}
Let $c>0$.  Every positive real-analytic function $\rho$ in a
sufficiently small $C^\infty(\T)$-neighborhood of the constant $c$ is
realized by a holomorphic map $F_\rho$ on a neighborhood of
$\overline\D$ with the following properties:
\begin{enumerate}[label=\textup{(\roman*)}]
\item $F_\rho(0)=0$, $F_\rho'(0)>0$, and $F_\rho$ is injective on
$\overline\D$;
\item $|F_\rho'|=\rho$ on $\T$.
\end{enumerate}
Thus $F_\rho(\D)$ is a bounded simply connected domain with
real-analytic Jordan boundary and
\[
\SpecS(F_\rho(\D))=\SpecS(\D,\rho).
\]
Moreover, if $\rho_k\to c$ in $C^\infty(\T)$, then the corresponding
maps may be chosen so that
\[
F_{\rho_k}\longrightarrow c\id
\quad\text{in }C^\infty(\overline\D).
\]
\end{proposition}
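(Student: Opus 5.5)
We construct $F_\rho$ as the primitive of the outer function with modulus $\rho$ on $\T$. Since $\rho$ is positive and real-analytic, $\log\rho$ is a real-analytic function on $\T$. Its Fourier coefficients therefore decay geometrically, and its harmonic extension $h$, together with the harmonic conjugate $\tilde h$ normalized by $\tilde h(0)=0$, extends harmonically to an annulus $\{r^{-1}<|z|<r\}$ for some $r>1$. Set
\[
G=\exp(h+\iu\tilde h),\qquad F_\rho(z)=\int_0^z G(\zeta)\dd\zeta .
\]
Then $G$ is holomorphic on a disk $\{|z|<r\}$ containing $\overline\D$ and has no zeros. We have $|G|=e^{h}=\rho$ on $\T$, so $|F_\rho'|=\rho$ there. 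Also $F_\rho(0)=0$, and $F_\rho'(0)=G(0)=\exp(h(0))>0$ because $\tilde h(0)=0$. This gives (ii) and the normalization part of (i).

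For injectivity, write $\log\rho=\log c+\phi$, where $\phi$ is small in $C^\infty(\T)$. The map $\phi\mapsto h+\iu\tilde h-\log c$ is given by the Herglotz integral. The conjugate-function operator is bounded on the Hölder spaces $C^{m,\alpha}(\T)$ (Privalov), and bounds on the boundary values in $C^{m,\alpha}(\T)$ yield bounds on the holomorphic extension in $C^{m}(\overline\D)$. Hence, if $\rho$ is sufficiently close to $c$ in $C^{m+1}(\T)$, then $\|h+\iu\tilde h-\log c\|_{C^0(\overline\D)}<\pi/4$. In that case $|\arg G|<\pi/4$ on $\overline\D$, so $\operatorname{Re}G>0$ on $\overline\D$. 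The Noshiro--Warschawski argument then applies: for distinct $z_1,z_2\in\overline\D$, integrating along the segment joining them (which lies in the convex set $\overline\D$) gives
\[
\operatorname{Re}\frac{F_\rho(z_2)-F_\rho(z_1)}{z_2-z_1}
=\int_0^1\operatorname{Re}G\bigl(z_1+t(z_2-z_1)\bigr)\dd t>0 .
\]
So $F_\rho$ is injective on $\overline\D$.

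Because $F_\rho$ is injective and holomorphic on $\overline\D$ and extends past it, $F_\rho(\T)$ is a real-analytic Jordan curve. Since $F_\rho'=G\ne0$, $F_\rho$ is a $C^1$ (indeed analytic) diffeomorphism $\overline\D\to\overline{F_\rho(\D)}$. The spectral identity then follows directly from \cref{prop:conformal-pullback}.

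For the convergence statement, suppose $\rho_k\to c$ in $C^\infty(\T)$. Then $\phi_k\to0$ in every $C^{m,\alpha}(\T)$. By the Hölder estimates above, $h_k+\iu\tilde h_k\to\log c$ in $C^m(\overline\D)$ for every $m$. Hence $G_k\to c$ in $C^\infty(\overline\D)$, and integrating from $0$ gives $F_{\rho_k}\to c\,\id$ in $C^\infty(\overline\D)$. Eventually each $\rho_k$ lies in the neighborhood where injectivity holds, so the chosen maps are the ones constructed above.

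The main technical point is to make the smallness of the weight in $C^\infty(\T)$, in fact in a finite $C^{m,\alpha}$ norm, control $\arg G$ uniformly up to the boundary. This requires the boundedness of the Hilbert transform on Hölder spaces rather than on $C^m$ spaces, since the conjugate-function operator is not bounded on $C^0$. I expect this Hölder estimate to be the only delicate step; everything else is the explicit outer-function formula and the Noshiro--Warschawski argument.
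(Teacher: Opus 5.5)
Your proposal is correct and follows essentially the same route as the paper. Both take the primitive of the normalized outer function with modulus $\rho$, use positivity of its real part on the convex set $\overline\D$ (the Noshiro--Warschawski difference-quotient argument) for injectivity, and use continuity of harmonic conjugation for the $C^\infty$ convergence. Your Privalov/H\"older-space estimate and the Fourier-decay argument for holomorphic extension simply make explicit two points the paper states only briefly.
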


\begin{proof}
For a positive real-analytic $\rho$, define its normalized outer
function by
\[
O_\rho(z)
=
\exp\left\{
\frac1{2\pi}
\int_0^{2\pi}
\frac{e^{\iu t}+z}{e^{\iu t}-z}
\log\rho(e^{\iu t})\dd t
\right\}.
\]
The outer-function theorem gives $|O_\rho|=\rho$ on $\T$ and
$O_\rho(0)>0$; see \cite{Duren1970}.  Real analyticity of
$\log\rho$ implies that $O_\rho$ extends holomorphically, without
zeros, across $\T$.  The map $\rho\mapsto O_\rho$ is continuous at
the constant $c$ in the smooth topology, because harmonic conjugation
is continuous on smooth functions of mean zero. Hence
$O_{\rho_k}\longrightarrow c$ in $C^\infty(\overline\D)$ whenever
$\rho_k\to c$. In particular,
$\operatorname{Re}O_\rho>0$ on $\overline\D$ whenever $\rho$ is
sufficiently close to $c$.

Set $F_\rho(z)=\int_0^z O_\rho(\zeta)\dd\zeta$. For distinct $z_1,z_2\in\overline\D$, convexity of $\overline\D$
yields
\[
\frac{F_\rho(z_2)-F_\rho(z_1)}{z_2-z_1}
=
\int_0^1O_\rho\bigl(z_1+s(z_2-z_1)\bigr)\dd s.
\]
The real part of the right-hand side is positive.  Hence $F_\rho$ is
injective on $\overline\D$.  The remaining assertions follow from
the construction, the smooth convergence above, and
\cref{prop:conformal-pullback}.
\end{proof}

\subsection{Boundary rigidity for finite Blaschke products}

The following elementary observation will later convert congruence of
the Euclidean domains into equivalence of their branched covers.

\begin{lemma}
\label{lem:blaschke-boundary-rigidity}
Let $B_1$ and $B_2$ be finite Blaschke products of the same degree.
\begin{enumerate}[label=\textup{(\roman*)}]
\item If $\varphi\in\Aut(\D)$ and
\[
\left|\frac{d}{dt}B_1(e^{\iu t})\right|
=
\left|\frac{d}{dt}B_2(\varphi(e^{\iu t}))\right|
\qquad(t\in\R),
\]
then $B_1=\eta B_2\circ\varphi$ for some $\eta\in\T$.
\item If $\varphi$ is an antiholomorphic automorphism of $\D$ and the
same equality of boundary derivative moduli holds, then $B_1(z)=\eta\,\overline{B_2(\varphi(z))}$ for some $\eta\in\T$.
\end{enumerate}
\end{lemma}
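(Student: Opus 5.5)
The plan is to reduce both parts to the statement that two finite Blaschke products whose boundary derivative moduli agree pointwise differ by a unimodular constant. First I treat the case $\varphi=\id$ in (i). For a finite Blaschke product $B$ of degree $n$, write $B(e^{\iu t})=e^{\iu\theta(t)}$ on $\T$. The function $\theta$ is strictly increasing, because
\[
\frac{d}{dt}\arg B(e^{\iu t})=\sum_{k}\frac{1-|a_k|^2}{|e^{\iu t}-a_k|^2}>0 .
\]
Hence $|\tfrac{d}{dt}B(e^{\iu t})|=\theta'(t)$ is the real part of the holomorphic function $zB'(z)/B(z)$ on $\T$. That is, $\theta'(t)=\operatorname{Re}\bigl(zB'/B\bigr)(e^{\iu t})=zB'(z)/B(z)$ evaluated at $z=e^{\iu t}$, since this quantity is real on $\T$.

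Now suppose $|\tfrac{d}{dt}B_1|=|\tfrac{d}{dt}B_2|$ on $\T$. Then the two meromorphic functions $g_j=zB_j'/B_j$ agree on $\T$, so they agree identically by the identity principle. Therefore $(\log B_1-\log B_2)'=0$ away from zeros and poles. It follows that $B_1/B_2$ is locally constant on the connected set $\C$ minus a finite set, so $B_1=\eta B_2$. Since both functions are unimodular on $\T$, we get $\eta\in\T$.

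For general $\varphi\in\Aut(\D)$, I set $\widetilde B_2=B_2\circ\varphi$. This is again a finite Blaschke product of degree $n$. The hypothesis is naturally read as $|\tfrac{d}{dt}B_1(e^{\iu t})|=|\tfrac{d}{dt}\widetilde B_2(e^{\iu t})|$, where the derivative is of the composite. The one point to check is whether the displayed equality means this or $|B_2'(\varphi(e^{\iu t}))|$ times something. Taken literally as the derivative of $t\mapsto B_2(\varphi(e^{\iu t}))$, it is exactly the composite, and the previous step gives $B_1=\eta\,\widetilde B_2$.

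For (ii), I write $\varphi(z)=\psi(\bar z)$ with $\psi\in\Aut(\D)$. Then $z\mapsto\overline{B_2(\varphi(z))}$ equals $\overline{B_2(\psi(\bar z))}=B_2^*(\psi^*(z))$, where $h^*(z):=\overline{h(\bar z)}$. The map $B_2^*\circ\psi^*$ is again a finite Blaschke product of degree $n$. On $\T$ the map $t\mapsto\overline{w(t)}$ preserves $|dw/dt|$, so the hypothesis becomes equality of the boundary derivative moduli of $B_1$ and $C:=\overline{B_2\circ\varphi}$. The latter is holomorphic in $z$ because the two conjugations cancel. Applying the case treated above gives $B_1=\eta C$, which is the claim.

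I expect the only real subtlety to be the identity $|\tfrac{d}{dt}B(e^{\iu t})|=zB'(z)/B(z)$ on $\T$, specifically its positivity, which lets me drop the absolute value. It follows from the explicit argument-derivative formula for Blaschke factors given above.
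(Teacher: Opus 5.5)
Your proof is correct and follows essentially the same route as the paper. The paper takes increasing argument lifts of $B_1$ and $B_2\circ\varphi$, uses orientation preservation to equate their derivatives, integrates to a boundary rotation and extends it into the disk; you encode the same angular derivative as the rational function $zB'/B$ (with explicit positivity) and conclude by the identity principle, and you reduce part (ii) to part (i) through the Blaschke product $\overline{B_2\circ\varphi}$ exactly as the paper does.
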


\begin{proof}
For \textup{(i)}, choose increasing lifts $b_1,b_2:\R\to\R$ such that
$B_1(e^{\iu t})=e^{\iu b_1(t)}$ and
$B_2(\varphi(e^{\iu t}))=e^{\iu b_2(t)}$.
Finite Blaschke products and holomorphic disk automorphisms preserve
boundary orientation.  The hypothesis therefore says $b_1'=b_2'$,
so $b_1-b_2$ is constant.  The boundary maps differ by a rotation,
and the identity extends holomorphically to $\D$.

For \textup{(ii)}, the function $C(z)=\overline{B_2(\varphi(z))}$ is a finite Blaschke product of the same degree as $B_2$.  Its
derivative modulus on the circle is the boundary derivative modulus
of $B_2\circ\varphi$.  Part \textup{(i)} applied to $B_1$ and $C$
gives the conclusion.
\end{proof}

\section{Proof of the main theorem}
\label{sec:proof-main}

\subsection{Group-theoretic input}

If a finite group $G$ acts on a finite set $S$ and $g\in G$, let
$c_S(g)$ denote the number of cycles of the permutation induced by
$g$ on $S$, including one-cycles.

Recall that subgroups $H_1,H_2\leq G$ are \emph{almost conjugate} if
every conjugacy class of $G$ meets them in the same number of
elements.  Equivalently, the permutation representations on
$G/H_1$ and $G/H_2$ have the same character. In particular, the two actions have the
same cycle type for each $g\in G$: the numbers of fixed points of all
powers $g^k$ determine the numbers of cycles of every length by
M\"obius inversion.

\begin{lemma}
\label{lem:group-input}
There exist a finite group $G$, nonconjugate almost-conjugate
subgroups $H_1,H_2\leq G$ of a common index $n$, and elements
$x,y\in G$ such that
\begin{enumerate}[label=\textup{(\alph*)}]
\item $x$ and $y$ generate $G$;
\item $xy$ acts as an $n$-cycle on $G/H_1$;
\item writing $c(g)=c_{G/H_1}(g)$, one has
\[
c(x)+c(y)=n+1,
\qquad
c(x)<n,
\qquad
c(y)<n.
\]
\end{enumerate}
In fact, one may take $n=7$,
$G=\operatorname{GL}(3,\Ftwo)$, with
$\operatorname{ord}(x)=2$, $\operatorname{ord}(y)=3$, and
$\operatorname{ord}(xy)=7$.
\end{lemma}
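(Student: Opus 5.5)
We will verify the explicit instance $G=\operatorname{GL}(3,\Ftwo)$, $n=7$, with $H_1$ the stabilizer of a nonzero vector (a point of the Fano plane $\mathbb P^2(\Ftwo)$) and $H_2$ the stabilizer of a two-dimensional subspace (a line).

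\emph{Almost conjugacy and nonconjugacy.} The transpose-inverse map $g\mapsto (g^{T})^{-1}$ is an outer automorphism of $G$ interchanging point and line stabilizers. For nonconjugacy, note that $H_1$ fixes a point of $\mathbb P^2(\Ftwo)$, while $H_2$ acts transitively on the seven points: it acts transitively on the three points of its line, and transitively on the four points off it, as one sees from the affine action. Hence $H_1$ and $H_2$ cannot be conjugate in $G$.

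For almost conjugacy we show that the permutation characters on points and on lines agree. The number of fixed points of $g$ on $\mathbb P^2$ is $|\ker(g-1)|-1$ over $\Ftwo$, and the number of fixed lines is the number of fixed points of $(g^T)^{-1}$. Since $\ker(g-1)$ and $\ker(g^T-1)$ have the same dimension, we get $\operatorname{rank}(g-1)=\operatorname{rank}(g^{T}-1)$ and $\ker(g^{T}-1)=\ker((g^T)^{-1}-1)$. So the two counts agree for every $g$. This is the classical point--line Gassmann pair of degree $7$.

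\emph{Generation and cycle counts.} We use the classical fact that $\operatorname{PSL}_2(7)$ is a Hurwitz group: there are $x,y$ with $x^2=y^3=(xy)^7=1$ generating $G$. We will either cite this or exhibit explicit matrices and check the orders; generation then follows because no proper subgroup of $G$ has order divisible by $42$ other than... The cleaner argument is that $\langle x,y\rangle$ has order divisible by $2\cdot3\cdot7=42$. The maximal subgroups of $G$ have orders $24,24,21$, and none is divisible by $42$, so $\langle x,y\rangle=G$.

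Next we count cycles on the seven points. An element of order $7$ has no fixed point, since its order does not divide $|H_1|=24$; hence it acts as a $7$-cycle, which gives (b). An involution is a transvection, so $\ker(x-1)$ is a plane. It therefore fixes $3$ points, and its cycle type is $1^3 2^2$, giving $c(x)=5$. An element of order $3$ has $\ker(y-1)$ of dimension $1$, because its minimal polynomial involves the irreducible factor $t^2+t+1$. It therefore fixes $1$ point, its type is $1\,3^2$, and $c(y)=3$. Then $c(x)+c(y)=8=n+1$, with both values less than $7$, which gives (c).

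\emph{Main obstacle.} The main obstacle is making the $(2,3,7)$-generation concrete. We will try explicit matrices, for instance a permutation-type matrix of order $3$ and a suitable transvection, and verify that their product has characteristic polynomial $t^3+t+1$ or $t^3+t^2+1$. Such a product has order $7$ because it is irreducible over $\Ftwo$ with a primitive root. As a consistency check, the Riemann--Hurwitz count $(7-c(x))+(7-c(y))+(7-1)=2\cdot7-2$ confirms a genus-zero configuration.
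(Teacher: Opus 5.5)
Your proposal is correct and is essentially the paper's proof. It uses the same point--line Gassmann pair, obtains almost conjugacy from the equality $\dim\ker(g-1)=\dim\ker((g^T)^{-1}-1)$, relies on the Hurwitz $(2,3,7)$-generation, and reads off the cycle types $2^2 1^3$ and $3^2 1$ from the fixed-space dimensions. Your two variations are both valid: the maximal-subgroup check for generation (no subgroup of order $24$ or $21$ is divisible by $42$), and the $7\nmid 24$ argument for the $7$-cycle in place of the paper's faithfulness argument.

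One slip needs fixing. The line stabilizer $H_2$ is \emph{not} transitive on the seven points, since it preserves its line. Its orbits are the sets of sizes $3$ and $4$ that you describe, so it fixes no point, whereas every conjugate of $H_1$ fixes one. That is exactly what the nonconjugacy argument needs, so the conclusion stands once you replace ``transitive'' with this statement.
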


\begin{proof}
Let
\[
G=\operatorname{GL}(3,\Ftwo),\qquad
W=\Ftwo^3,\qquad
\mathcal P=W\setminus\{0\},\qquad
\mathcal L=W^*\setminus\{0\}.
\]
Let $H_P$ be the stabilizer of a nonzero vector and $H_L$ the
stabilizer of a nonzero covector, where $G$ acts on $W^*$ by the
contragredient action.  This is the classical degree-seven
point--line Gassmann pair used in the Gordon--Webb--Wolpert
construction; see \cite[Sec.~3.3]{GordonWebbWolpert1992}.

For completeness, the two permutation characters agree. Indeed,
for $g\in G$ the numbers of fixed points on $\mathcal P$ and
$\mathcal L$ are $2^{\dim\ker(g-I)}-1$ and $2^{\dim\ker((g^{-1})^T-I)}-1$, respectively, and these are equal because $(g^{-1})^T-I=(g^{-1})^T(I-g^T)$. Thus $H_P$ and $H_L$ are almost conjugate, and both have index seven.

They are not conjugate.  To see this, take $H_L$ to be the stabilizer
of $e_1^T$.  Thus its elements have first row $(1,0,0)$.  Varying the
lower-left column while keeping the lower-right $2\times2$ block
equal to the identity shows that a vector fixed by all of $H_L$ must
have first coordinate zero.  Varying the lower-right block through
$\operatorname{GL}(2,\Ftwo)$ then shows that no nonzero vector is
fixed by all of $H_L$.  Every conjugate of $H_P$, on the other hand,
fixes a nonzero vector.  Hence $H_P$ and $H_L$ are not conjugate.

Since $\Ftwo^\times=\{1\}$, one has
\[
\operatorname{GL}(3,\Ftwo)
=
\operatorname{SL}(3,\Ftwo)
=
\operatorname{PSL}(3,\Ftwo)
\cong\operatorname{PSL}_2(7).
\]
The latter is a Hurwitz group, so there exist $x,y\in G$ generating
$G$ with $\operatorname{ord}(x)=2$, $\operatorname{ord}(y)=3$, and
$\operatorname{ord}(xy)=7$; see
\cite{LucchiniTamburiniWilson2000}.

Consider the action on $\mathcal P$.  Since the characteristic is
two, for the involution $x$ the nonzero operator $N=x-I$ satisfies
$N^2=0$.  Hence $\operatorname{rank}N=1$, so
$\dim\ker(x-I)=2$.  Thus $x$ fixes three nonzero vectors and has
cycle type $2^2 1^3$, so $c(x)=5$.

An element of order three in $\operatorname{GL}(3,\Ftwo)$ has a
one-dimensional fixed space and an irreducible two-dimensional block
with minimal polynomial $t^2+t+1$.  Hence $y$ has cycle type $3^2 1$, so $c(y)=3$.

Finally, the action of $G$ on $G/H_{P}$ is faithful. Choose $v_0\in\mathcal P$ and let $H_P=\operatorname{Stab}_G(v_0)$. Since $G$ acts transitively on $\mathcal P$, the map $G/H_P\to\mathcal P$, $gH_P\mapsto gv_0$, is a $G$-equivariant bijection. Hence the coset action of $G$ on $G/H_P$ may be identified with the natural action of $G$ on $\mathcal P$. Therefore
$xy$, which has order seven, induces a permutation of order seven on
the seven-element set $\mathcal P$, and hence is a seven-cycle.
Consequently
\[
c(x)+c(y)=5+3=8=7+1.
\]
Taking $H_1=H_P$ and $H_2=H_L$ proves the lemma.
\end{proof}

From now on, fix $G,H_1,H_2,x,y$ and $n$ as in
\cref{lem:group-input}.

\subsection{Polynomial covers from permutation data}

Let $\pi_j$ be the action of $G$ on $G/H_j$.  Almost conjugacy implies
that the hypotheses involving cycle data hold for both $\pi_1$ and
$\pi_2$.  Consider the transitive permutation triples
\[
\bigl(\pi_j(x),\pi_j(y),\pi_j((xy)^{-1})\bigr),
\qquad j\in\{1,2\}.
\]
Each triple has product one.  By the Riemann existence theorem
\cite{Forster1981}, it is the monodromy triple of a connected
degree-$n$ branched cover of the Riemann sphere
$f_j:\Sigma_j\longrightarrow\widehat{\C}$, where
$\widehat{\C}=\C\cup\{\infty\}$, with branch values $0$, $1$, and
$\infty$. Its total ramification is
\[
(n-c(x))+(n-c(y))+(n-1)=2n-2.
\]
The Riemann--Hurwitz formula therefore shows that $\Sigma_j$ has genus
zero.  The monodromy over infinity is an $n$-cycle, so
$f_j^{-1}(\infty)$ consists of a single point of local degree $n$.
After identifying $\Sigma_j$ with $\widehat{\C}$, sending that point
to infinity, and rescaling the affine source coordinate, the covering
is represented by a monic polynomial
\begin{equation}
p_j(z)=z^n+\sum_{k=0}^{n-1}a_{j,k}z^k.
\label{eq:general-polynomials}
\end{equation}
Thus the only finite critical values of $p_j$ are $0$ and $1$, with
local monodromies $\pi_j(x)$ and $\pi_j(y)$.

Fix once and for all a complex number
$\lambda_*\in\C\setminus\R$ with $|\lambda_*|>1$; for definiteness,
we take $\lambda_*=2+\iu$. Set
\[
u_\varepsilon=\varepsilon,
\qquad
v_\varepsilon=\lambda_*\varepsilon,
\qquad
\delta_\varepsilon=v_\varepsilon-u_\varepsilon
=(\lambda_*-1)\varepsilon,
\]
and define
\begin{equation}
q_{j,\varepsilon}(z)
=u_\varepsilon+\delta_\varepsilon p_j(z),
\qquad
Y_{j,\varepsilon}=q_{j,\varepsilon}^{-1}(\D).
\label{eq:general-rescaled-covers}
\end{equation}
For sufficiently small $\varepsilon>0$, both finite critical values
$u_\varepsilon$ and $v_\varepsilon$ lie in $\D$.

\begin{lemma}
\label{lem:general-disk-covers}
For all sufficiently small $\varepsilon$, the set
$Y_{j,\varepsilon}$ is a bounded simply connected domain with
real-analytic Jordan boundary, and $q_{j,\varepsilon}:Y_{j,\varepsilon}\longrightarrow\D$ is a proper holomorphic map of degree $n$.  Its two critical values
are $u_\varepsilon$ and $v_\varepsilon$.
\end{lemma}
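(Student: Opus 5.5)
We plan to prove the lemma using only that $q=q_{j,\varepsilon}$ is a polynomial of degree $n$ whose finite critical values all lie in $\D$, together with Riemann--Hurwitz and the cycle data of \cref{lem:group-input}.

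\emph{Step 1: properness and boundedness.} Since $q$ is a nonconstant polynomial, $|q(z)|\to\infty$ as $|z|\to\infty$. Hence $Y=q^{-1}(\D)$ is bounded. The closure of $Y$ lies in the compact set $q^{-1}(\overline\D)$, so $q:Y\to\D$ is proper. A proper holomorphic map is a branched cover whose degree is the number of preimages of a generic point counted with multiplicity. Every $w\in\D$ has exactly $n$ preimages in $\C$ with multiplicity, and all of them lie in $Y$. So the degree is $n$.

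\emph{Step 2: boundary regularity.} The critical values of $q$ are $u_\varepsilon+\delta_\varepsilon\cdot\{0,1\}=\{u_\varepsilon,v_\varepsilon\}$, and both lie in $\D$ for small $\varepsilon$. Hence $q'\neq0$ on $q^{-1}(\T)$. So $\partial Y=q^{-1}(\T)=\{|q|^2=1\}$ is a compact real-analytic $1$-manifold, and near it $q$ is a local biholomorphism. Also $q:\partial Y\to\T$ is an unbranched $n$-sheeted covering. Likewise $q:\overline Y\to\overline\D$ is a branched cover, branched only over $u_\varepsilon$ and $v_\varepsilon$.

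\emph{Step 3: connectedness and topology.} This is the main step. We use the identification of local monodromies. Since $\D$ deformation retracts onto the critical values together with paths, the monodromy of $q|_Y$ around loops in $\D$ based near the critical values is given by $\pi_j(x)$ around $u_\varepsilon$ and $\pi_j(y)$ around $v_\varepsilon$. Concretely, the affine map $w\mapsto u_\varepsilon+\delta_\varepsilon w$ carries $0,1$ to $u_\varepsilon,v_\varepsilon$. Because $\D$ is simply connected and contains both points, we can choose a loop system in $\D$ that is the image of a standard loop system for $p_j$ around $0$ and $1$. Consequently, the monodromy group of $q|_Y$ is $\langle\pi_j(x),\pi_j(y)\rangle=\pi_j(G)$, which is transitive, so $Y$ is connected. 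The monodromy around $\T$ is $\pi_j(xy)$, up to orientation and inverse, which is an $n$-cycle. Hence $\partial Y$ is a single circle.

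Riemann--Hurwitz for the bordered surface gives
\[
\chi(Y)=n\chi(\D)-\bigl[(n-c(x))+(n-c(y))\bigr]=c(x)+c(y)-n=1,
\]
using \cref{lem:group-input}(c), with the same cycle counts for $\pi_2$ by almost conjugacy. A compact connected orientable surface with one boundary component and Euler characteristic one is a closed disk. Therefore $Y$ is simply connected and $\partial Y$ is a real-analytic Jordan curve.

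\emph{Step 4: critical values and the main difficulty.} The critical values of $q|_Y$ are exactly $u_\varepsilon$ and $v_\varepsilon$. Both are genuinely branched, because $c(x)<n$ and $c(y)<n$.

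The main care is needed in Step 3. There one must justify that the monodromy of the restricted cover over $\D$ is generated by the global local monodromies. This holds because $\D\setminus\{u_\varepsilon,v_\varepsilon\}\hookrightarrow\C\setminus\{u_\varepsilon,v_\varepsilon\}$ induces an isomorphism on $\pi_1$: a disk with two punctures includes into the plane with two punctures as a homotopy equivalence.
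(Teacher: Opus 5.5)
Your proof is correct and follows essentially the same route as the paper: transitivity of the monodromy group $\langle\pi_j(x),\pi_j(y)\rangle$ gives connectedness, the absence of critical values on $\T$ makes $q_{j,\varepsilon}^{-1}(\overline\D)$ a compact surface with real-analytic boundary, and Riemann--Hurwitz with $c(x)+c(y)=n+1$ gives $\chi=1$ and hence a disk. The only minor differences are that you read off the single boundary component from the $n$-cycle monodromy along $\T$, whereas the paper gets it directly from $\chi=1$. You also spell out, via the homotopy equivalence $\D\setminus\{u_\varepsilon,v_\varepsilon\}\hookrightarrow\C\setminus\{u_\varepsilon,v_\varepsilon\}$, why the monodromy of the restricted cover over $\D$ is the global one, a point the paper leaves implicit.
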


\begin{proof}
After removing the two branch values and their preimages, the
monodromy group is generated by $\pi_j(x)$ and $\pi_j(y)$.  It is
transitive because $x$ and $y$ generate $G$.  Hence
$q_{j,\varepsilon}^{-1}(\overline\D)$ is connected.  No critical
value lies on $\T$, so this inverse image is a compact orientable
surface with real-analytic boundary.  The Riemann--Hurwitz formula for branched coverings of compact
surfaces with boundary gives
\[
\chi\bigl(q_{j,\varepsilon}^{-1}(\overline\D)\bigr)
=n-(n-c(x))-(n-c(y))=1.
\]
A connected orientable compact surface with nonempty boundary and
Euler characteristic one has genus zero and one boundary component.
Its interior is therefore a disk.  Boundedness follows because
$q_{j,\varepsilon}$ is a polynomial.  Properness and the degree
statement are immediate. The inequalities $c(x)<n$ and $c(y)<n$ in
\cref{lem:group-input} ensure that both finite branch values are
genuine critical values.
\end{proof}

\subsection{Sunada quotients and weighted disks}

Set $r:=\operatorname{ord}(x)$ and
$s:=\operatorname{ord}(y)$.
Let $\mathcal O_\varepsilon$ be the compact disk orbifold whose
underlying space is $\overline\D$, with an interior cone point of order
$r$ at $u_\varepsilon$ and an interior cone point of order $s$ at
$v_\varepsilon$.  Its orbifold fundamental group is
\[
\Gamma_\varepsilon
=
\pi_1^{\mathrm{orb}}(\mathcal O_\varepsilon)
=
\langle \alpha,\beta:\alpha^r=\beta^s=1\rangle
\simeq C_r*C_s,
\]
where $C_m$ denotes the cyclic group of order $m$.
The assignment $\alpha\mapsto x$, $\beta\mapsto y$ defines an
epimorphism $\Phi:\Gamma_\varepsilon\longrightarrow G$.
By the definitions of $r$ and $s$, the restrictions of $\Phi$ to the
factors $C_r$ and $C_s$ are injective.  Every finite-order element of
a free product is conjugate into one of its factors
\cite{LyndonSchupp1977}.  Hence the kernel of $\Phi$ is torsion-free.
The regular orbifold cover $M_\varepsilon\longrightarrow\mathcal O_\varepsilon$ associated with $\ker\Phi$ is consequently a smooth compact surface
with boundary, with deck group $G$.  The action of $G$ is free on a
collar of the boundary: the covering is ordinary there, and a deck
transformation fixing a boundary point must be the identity.  In
particular, none of the quotients below has reflector strata on its
boundary.

Choose the generators $\alpha$ and $\beta$ above as positively oriented
meridians about $u_\varepsilon$ and $v_\varepsilon$, compatibly with
the monodromy triples defining $q_{j,\varepsilon}$.  Identify a fiber
of the regular cover with $G$ so that monodromy acts by left
multiplication through $\Phi$ and deck transformations act on the
right.  For $j\in\{1,2\}$, let $X_{j,\varepsilon}=M_\varepsilon/H_j$, where the quotient is taken with respect to the restricted right
action.  The quotient maps form the diagram
\[
\begin{tikzcd}[column sep=large,row sep=large]
& M_\varepsilon \arrow[dl] \arrow[dr] \arrow[dd] & \\
X_{1,\varepsilon} \arrow[dr] & & X_{2,\varepsilon} \arrow[dl] \\
& \mathcal O_\varepsilon &
\end{tikzcd}
\]
With this convention, the fiber of
$X_{j,\varepsilon}\to\mathcal O_\varepsilon$ is $G/H_j$ and its
monodromy is $\pi_j\circ\Phi$.  Hence the induced branched cover
$|X_{j,\varepsilon}|\to\overline\D$ has the same monodromy data as
$q_{j,\varepsilon}:\overline{Y_{j,\varepsilon}}\to\overline\D$.
Since the boundary monodromy is the $n$-cycle
$\pi_j((xy)^{-1})$, both covers have a single boundary component.
By the uniqueness statement for branched covers with prescribed
monodromy (equivalently, by applying the uniqueness part of the
Riemann existence theorem after filling the boundary), there is a
conformal equivalence of bordered surfaces
$|X_{j,\varepsilon}|\to\overline{Y_{j,\varepsilon}}$ over
$\overline\D$; see \cite{Forster1981}. We henceforth identify these bordered surfaces
via such an equivalence.  The orientation of
$\mathcal O_\varepsilon$ lifts to $M_\varepsilon$, and every deck
transformation preserves the lifted orientation.  Hence every local
isotropy group in $X_{j,\varepsilon}$ acts by rotations.  Since the
deck action is free near the boundary, $|X_{j,\varepsilon}|$ is a
disk, its orbifold singularities are all interior cone points, and its
boundary is smooth.

Choose a smooth orbifold metric $g_\varepsilon$ compatible with the
orbifold conformal structure on $\mathcal O_\varepsilon$ induced by the
standard complex structure of $\D$, and equal to the Euclidean metric
$|dw|^2$ on a collar of $\T$.  Pull it back to
$M_\varepsilon$ and descend it to the intermediate quotients.

\begin{proposition}
\label{prop:general-sunada}
The orbisurfaces $X_{1,\varepsilon}$ and $X_{2,\varepsilon}$ have
identical Steklov spectra, including multiplicities.
\end{proposition}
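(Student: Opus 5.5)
The plan is to run the Steklov version of Sunada's argument directly on the $G$-invariant Riemannian surface $(M_\varepsilon,g_\varepsilon)$, following \cite[Theorem~2.3 and Remark~2.6]{GordonHerbrichWebb2021}. First, by construction $g_\varepsilon$ on $M_\varepsilon$ is a smooth $G$-invariant Riemannian metric, since it is pulled back from an orbifold metric on $\mathcal O_\varepsilon$ and $\ker\Phi$ is torsion-free. Hence $G$ acts on $M_\varepsilon$ by isometries, and this action commutes with the Dirichlet-to-Neumann operator $\mathcal D$ of $(M_\varepsilon,g_\varepsilon)$.

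Next, I identify the Steklov spectrum of each intermediate quotient $X_{j,\varepsilon}=M_\varepsilon/H_j$ with the spectrum of $\mathcal D$ restricted to $H_j$-invariant boundary functions. Using the variational definition of \cite{AriasMarcoDrydenGordonHassannezhadRayStanhope2019}, $H^1(X_{j,\varepsilon})$ is the space of $H_j$-invariant elements of $H^1(M_\varepsilon)$. The Dirichlet energy and the boundary $L^2$ norm on $X_{j,\varepsilon}$ equal $1/|H_j|$ times the corresponding quantities on $M_\varepsilon$; the cone points have measure zero and the covering is unramified near the boundary. The factor $1/|H_j|$ cancels in the Rayleigh quotient, so min--max gives
\[
\SpecS(X_{j,\varepsilon})=\operatorname{Spec}\bigl(\mathcal D|_{L^2(\partial M_\varepsilon)^{H_j}}\bigr),
\]
counted with multiplicity. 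Since $\mathcal D$ has discrete spectrum, each eigenspace $E_\sigma\subset C^\infty(\partial M_\varepsilon)$ is a finite-dimensional $G$-representation, and the multiplicity of $\sigma$ for $X_{j,\varepsilon}$ is $\dim E_\sigma^{H_j}$.

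Finally, I use the standard Sunada computation. The character identity
\[
\dim V^{H}=\frac1{|H|}\sum_{h\in H}\chi_V(h)
\]
holds for any finite-dimensional representation $V$. Almost conjugacy gives $|H_1|=|H_2|$ (common index $n$) and says that each conjugacy class meets $H_1$ and $H_2$ in the same number of elements. Since $\chi_V$ is a class function, it follows that $\dim E_\sigma^{H_1}=\dim E_\sigma^{H_2}$ for every $\sigma$, and hence the two spectra coincide with multiplicity.

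The main obstacle is the identification step: checking that the orbifold variational spectrum of $X_{j,\varepsilon}$ really equals the $H_j$-invariant spectrum on $M_\varepsilon$. This requires that the $H_j$-invariant $H^1$ functions on $M_\varepsilon$ are exactly $H^1$ of the orbisurface. It also requires that the boundary measure downstairs is the pushforward of the boundary measure upstairs divided by $|H_j|$, which holds because the action is free on the boundary collar. I would settle this by citing the orbifold setting of \cite[Remark~2.6]{GordonHerbrichWebb2021} and \cite{AriasMarcoDrydenGordonHassannezhadRayStanhope2019} rather than reproving it. It is also worth noting that the zero eigenvalue is handled uniformly, since constants are invariant in both quotients.
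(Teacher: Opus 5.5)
Your proposal is correct and follows essentially the same route as the paper: $G$-equivariance of the Dirichlet-to-Neumann operator on $M_\varepsilon$, then identifying the multiplicity of $\sigma$ on $X_{j,\varepsilon}$ with $\dim E_\sigma^{H_j}$, then the formula $\dim E_\sigma^{H}=|H|^{-1}\sum_{h\in H}\chi_\sigma(h)$ combined with almost conjugacy. The only minor difference is the identification step: you argue variationally, via $H^1(X_{j,\varepsilon})=H^1(M_\varepsilon)^{H_j}$ and cancellation of the factor $1/|H_j|$ in the Rayleigh quotient, whereas the paper uses the intertwining $\Lambda_M r_j^*=r_j^*\Lambda_{X_{j,\varepsilon}}$ and the descent of $H_j$-invariant harmonic extensions; both are standard and work equally well here.
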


\begin{proof}
This is the orbifold version of the Steklov Sunada theorem; see
\cite[Theorem~2.3 and Remark~2.6]{GordonHerbrichWebb2021} and
\cite[Theorem~9.9]{ColboisGirouardGordonSher2024}. Let $\Lambda_M$ denote the Dirichlet-to-Neumann operator on $\partial M_\varepsilon$. Since the deck transformations act on $M_\varepsilon$ by isometries preserving the boundary, they induce a left action of $G$ on $C^\infty(\partial M_\varepsilon)$ by $(g\cdot f)(p)=f(p\cdot g)$. If $u_f$ denotes the harmonic extension of $f$, then $g\cdot u_f$ is the harmonic extension of $g\cdot f$. Since the action preserves the outward unit normal, $\Lambda_M(g\cdot f)=g\cdot\Lambda_M f$. Thus $\Lambda_M$ commutes with the $G$-action.

The operator $\Lambda_M$ is a nonnegative self-adjoint elliptic pseudodifferential operator of order one on the compact manifold $\partial M_\varepsilon$, and hence has discrete spectrum with finite-dimensional eigenspaces. Therefore, for every Steklov eigenvalue $\sigma$, the eigenspace $E_\sigma:=\ker(\Lambda_M-\sigma I)$ is a finite-dimensional $G$-module.

For a subgroup $H\le G$, let $E_\sigma^H:=\{f\in E_\sigma:h\cdot f=f\text{ for every }h\in H\}$. Let $R_\sigma:G\to\operatorname{GL}(E_\sigma)$ be the corresponding
representation, and let $\chi_\sigma$ be its character.  The averaging
operator
$P_H:=|H|^{-1}\sum_{h\in H}R_\sigma(h)$
is the projection of $E_\sigma$ onto $E_\sigma^H$. Hence $\dim E_\sigma^H=\operatorname{tr}P_H=|H|^{-1}\sum_{h\in H}\chi_\sigma(h)$. Since $H_1$ and $H_2$ are almost conjugate, one has $|H_1\cap C|=|H_2\cap C|$ for every conjugacy class $C$ of $G$, and in particular $|H_1|=|H_2|$. Since $\chi_\sigma$ is constant on conjugacy classes,
\[
\dim E_\sigma^{H_1}
=\frac{1}{|H_1|}\sum_C |H_1\cap C|\,\chi_\sigma(C)
=\frac{1}{|H_2|}\sum_C |H_2\cap C|\,\chi_\sigma(C)
=\dim E_\sigma^{H_2}.
\]

It remains to identify these fixed subspaces with the Steklov eigenspaces on the quotients. Let $r_j:M_\varepsilon\to X_{j,\varepsilon}=M_\varepsilon/H_j$ be the quotient map. Pullback identifies smooth functions on $\partial X_{j,\varepsilon}$ with the $H_j$-invariant smooth functions on $\partial M_\varepsilon$. If $f\in C^\infty(\partial X_{j,\varepsilon})$ and $u_f$ is its harmonic extension to $X_{j,\varepsilon}$, then $u_f\circ r_j$ is the harmonic extension of $r_j^*f$ to $M_\varepsilon$. Since $r_j$ is an orbifold local isometry, and an ordinary local isometry near the boundary, one has $\Lambda_M r_j^*=r_j^*\Lambda_{X_{j,\varepsilon}}$.

Conversely, if $f\in E_\sigma^{H_j}$, then for every $h\in H_j$ the harmonic functions $h\cdot u_f$ and $u_f$ have the same boundary value, because $h\cdot f=f$. By uniqueness of the harmonic extension, $h\cdot u_f=u_f$. Hence $u_f$ is $H_j$-invariant and descends to a harmonic orbifold function on $X_{j,\varepsilon}$. Therefore pullback induces an isomorphism $\ker(\Lambda_{X_{j,\varepsilon}}-\sigma I)\cong E_\sigma^{H_j}$, and consequently $\operatorname{mult}_{X_{j,\varepsilon}}(\sigma)=\dim E_\sigma^{H_j}$. Since $\dim E_\sigma^{H_1}=\dim E_\sigma^{H_2}$ for every $\sigma$, the orbisurfaces $X_{1,\varepsilon}$ and $X_{2,\varepsilon}$ have identical Steklov spectra, including multiplicities.
\end{proof}

By \cref{lem:general-disk-covers}, each $Y_{j,\varepsilon}$ is a bounded
simply connected domain with real-analytic Jordan boundary, and
$q_{j,\varepsilon}:Y_{j,\varepsilon}\to\D$ is a proper holomorphic map
of degree $n$.
Choose Riemann maps
\[
\psi_{j,\varepsilon}:\D\longrightarrow Y_{j,\varepsilon}.
\]
Because $\partial Y_{j,\varepsilon}$ is real analytic, each map
extends biholomorphically across $\T$.  Define
\begin{equation}
B_{j,\varepsilon}
=q_{j,\varepsilon}\circ\psi_{j,\varepsilon},
\qquad
\rho_{j,\varepsilon}(e^{\iu t})
=|B_{j,\varepsilon}'(e^{\iu t})|.
\label{eq:general-blaschke-weights}
\end{equation}
The map $B_{j,\varepsilon}$ is a finite Blaschke product of degree
$n$.  Since the base metric is Euclidean near $\T$, the boundary line
element of $X_{j,\varepsilon}$ pulls back under
$\psi_{j,\varepsilon}$ to
$\rho_{j,\varepsilon}(e^{\iu t})\dd t$.  Applying
\cref{prop:cone-removal,prop:general-sunada} gives
\begin{equation}
\SpecS(\D,\rho_{1,\varepsilon})
=
\SpecS(\D,\rho_{2,\varepsilon}).
\label{eq:general-weighted-isospectrality}
\end{equation}

\subsection{Coalescence of the branch values}

We now normalize the Riemann maps in
\cref{eq:general-blaschke-weights} so that the weights approach a
constant.

\begin{proposition}
\label{prop:general-coalescence}
The maps $\psi_{j,\varepsilon}$ may be chosen so that
\begin{equation}
\rho_{j,\varepsilon}\longrightarrow n
\quad\text{in }C^\infty(\T)
\label{eq:general-weight-convergence}
\end{equation}
as $\varepsilon\to0$.
\end{proposition}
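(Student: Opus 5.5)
We will rescale $Y_{j,\varepsilon}$ so that it tends to the unit disk, and choose the Riemann maps to converge to the identity. Let $\mu_\varepsilon=|\delta_\varepsilon|^{-1/n}=(|\lambda_*-1|\varepsilon)^{-1/n}$, and fix a choice $\theta\in\R$ with $e^{\iu n\theta}\delta_\varepsilon/|\delta_\varepsilon|=1$. The argument of $\delta_\varepsilon$ does not depend on $\varepsilon$, so $\theta$ can be fixed once and for all. Set $L_\varepsilon(w)=\mu_\varepsilon e^{\iu\theta}w$ and
\[
\widetilde q_{j,\varepsilon}(w)=q_{j,\varepsilon}(L_\varepsilon w)
=u_\varepsilon+\delta_\varepsilon p_j(\mu_\varepsilon e^{\iu\theta}w).
\]
Expanding $p_j$ gives
\[
\widetilde q_{j,\varepsilon}(w)=w^n+\sum_{k=0}^{n-1}\delta_\varepsilon a_{j,k}(\mu_\varepsilon e^{\iu\theta})^k w^k+u_\varepsilon .
\]
The coefficient of $w^k$ has size $O(\varepsilon^{1-k/n})$, and the constant term is $O(\varepsilon)$. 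So $\widetilde q_{j,\varepsilon}\to w^n$ uniformly with all derivatives on compact subsets of $\C$, for instance on $\{|w|\le 2\}$.

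Next we show that the domains $\widetilde Y_{j,\varepsilon}=L_\varepsilon^{-1}(Y_{j,\varepsilon})=\widetilde q_{j,\varepsilon}^{-1}(\D)$ converge to $\D$ as real-analytic Jordan domains. Near $|w|=1$ we have $|\partial_w\widetilde q_{j,\varepsilon}|\ge n/2$, so the level curve $|\widetilde q_{j,\varepsilon}|=1$ is a small smooth perturbation of the unit circle. Concretely, write $\widetilde q_{j,\varepsilon}(w)=w^n(1+h_\varepsilon(w))$ on an annulus $1/2<|w|<2$, with $h_\varepsilon\to0$ in $C^\infty$. Then $\widetilde q_{j,\varepsilon}=(w\,g_\varepsilon(w))^n$ with $g_\varepsilon=(1+h_\varepsilon)^{1/n}$ (principal branch), and $\Phi_\varepsilon(w)=w g_\varepsilon(w)$ is a holomorphic map on the annulus, $C^\infty$-close to the identity. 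The boundary $\partial\widetilde Y_{j,\varepsilon}$ is $\Phi_\varepsilon^{-1}(\T)$. This is the image of $\T$ under a map $C^\infty$-close to the identity, i.e.\ a radial graph $r=1+\eta_\varepsilon(t)$ with $\eta_\varepsilon\to0$ in $C^\infty$.

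We then take $\widetilde\psi_{j,\varepsilon}:\D\to\widetilde Y_{j,\varepsilon}$ to be the Riemann map normalized by $\widetilde\psi(0)=0$ and $\widetilde\psi'(0)>0$, and set $\psi_{j,\varepsilon}=L_\varepsilon\circ\widetilde\psi_{j,\varepsilon}$. This gives $B_{j,\varepsilon}=\widetilde q_{j,\varepsilon}\circ\widetilde\psi_{j,\varepsilon}$.

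The main obstacle is the claim that $\widetilde\psi_{j,\varepsilon}\to\id$ in $C^\infty(\overline\D)$ when the boundary curves converge smoothly to $\T$. This is the smooth stability of the Riemann map under boundary perturbation. We plan to handle it by a Kellogg--Warschawski-type estimate. Write $\widetilde\psi(z)=z\exp(\phi(z))$, where $\phi$ is holomorphic with $\operatorname{Im}\phi(0)=0$. The boundary condition then becomes a nonlinear Riemann--Hilbert equation:
\[
\operatorname{Re}\phi(e^{\iu t})=\log\bigl(1+\eta_\varepsilon(t+\operatorname{Im}\phi(e^{\iu t}))\bigr).
\]
For small $\eta_\varepsilon$ this can be solved by the implicit function theorem in Hölder spaces $C^{m,\alpha}(\T)$, using boundedness of harmonic conjugation on $C^{m,\alpha}$. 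That produces a unique small solution, depending continuously on $\eta_\varepsilon$ and tending to $0$. Uniqueness of the normalized Riemann map identifies this solution with $\widetilde\psi$. An alternative is to cite Warschawski's theorem directly: $C^{m,\alpha}$ convergence of the boundary parametrizations gives $C^{m}$ convergence of the normalized Riemann maps up to the boundary.

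With $\widetilde\psi_{j,\varepsilon}\to\id$ and $\widetilde q_{j,\varepsilon}\to w^n$ smoothly near $\overline\D$, the composition $B_{j,\varepsilon}\to z^n$ in $C^\infty(\overline\D)$. Hence $\rho_{j,\varepsilon}=|B_{j,\varepsilon}'|_\T\to|nz^{n-1}|=n$ in $C^\infty(\T)$, since $|B'|$ is smooth where $B'\neq0$ and $B'\to nz^{n-1}$, which is nonvanishing on $\T$. Finally, this choice of $\psi_{j,\varepsilon}$ is still a Riemann map onto $Y_{j,\varepsilon}$, so \eqref{eq:general-weighted-isospectrality} is unaffected.
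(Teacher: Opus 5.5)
Your argument is correct, but it handles the key step differently from the paper. Both proofs start with the same rescaling (your $L_\varepsilon$ is the paper's $\tau_\varepsilon^{-1}$ for a suitable choice of root), so that $q_{j,\varepsilon}$ becomes a polynomial converging to $z^n$.

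\textbf{The paper's route.} The paper then uses only interior information about the Riemann maps. The Carath\'eodory kernel theorem gives $\phi_{j,\varepsilon}\to\id$ locally uniformly in $\D$, hence $B_{j,\varepsilon}\to z^n$ locally uniformly in $\D$. The upgrade to $C^\infty(\overline\D)$ comes from the rigidity of degree-$n$ Blaschke products:
\begin{enumerate}[label=\textup{(\arabic*)}]
\item Rouch\'e's theorem forces all $n$ zeros to $0$;
\item evaluating at one point forces the unimodular constant to $1$;
\item the explicit product then converges on a neighborhood of $\overline\D$.
\end{enumerate}
No boundary regularity theory for conformal maps is needed.

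\textbf{Your route.} You first prove smooth convergence of the boundary curves; the $n$-th root device $\Phi_\varepsilon$ does this cleanly. You then invoke smooth stability of normalized Riemann maps under $C^\infty$ boundary perturbation, either via Kellogg--Warschawski or via your implicit-function-theorem argument for the nonlinear Riemann--Hilbert equation.

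This is heavier machinery. If you run the implicit function theorem yourself, note that $a\mapsto\eta\circ(\id+\mathcal H a)$ loses derivatives, where $\mathcal H$ is the conjugate-function operator. So $\eta$ should range over something like $C^{m+2,\alpha}$ while $a\in C^{m,\alpha}$. This is harmless here because $\eta_\varepsilon\to0$ in $C^\infty$.

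In exchange, your route gives a stronger conclusion: the rescaled Riemann maps themselves converge to $\id$ in $C^\infty(\overline\D)$. It also never uses the Blaschke structure, so it applies to any family of domains with smoothly converging boundaries.

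\textbf{One small step to add.} Identifying $\partial\widetilde Y_{j,\varepsilon}$ with $\Phi_\varepsilon^{-1}(\T)$ requires the entire level set $\{|\widetilde q_{j,\varepsilon}|=1\}$ to lie in the annulus. Convergence on $\{|w|\le2\}$ alone does not give this. It follows from two estimates:
\begin{enumerate}[label=\textup{(\arabic*)}]
\item $|\widetilde q_{j,\varepsilon}|<1$ on $|w|\le1/2$;
\item for $|w|\ge2$, $|\widetilde q_{j,\varepsilon}(w)|\ge|w|^n\bigl(1-\sum_{k<n}|c_{k,\varepsilon}|\bigr)>1$, where $c_{k,\varepsilon}\to0$ are the lower-order coefficients.
\end{enumerate}
Alternatively, use connectedness of $\widetilde Y_{j,\varepsilon}$, as the paper does.
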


\begin{proof}
Choose $\tau_\varepsilon\in\C$ with
$\tau_\varepsilon^n=\delta_\varepsilon$, and set $Q_{j,\varepsilon}(z)
=q_{j,\varepsilon}(\tau_\varepsilon^{-1}z)$.
Using \cref{eq:general-polynomials,eq:general-rescaled-covers}, we
find
\[
Q_{j,\varepsilon}(z)
=u_\varepsilon+z^n
+
\sum_{k=0}^{n-1}a_{j,k}
\delta_\varepsilon\tau_\varepsilon^{-k}z^k.
\]
For $0\leq k\leq n-1$, we have \(|\delta_\varepsilon\tau_\varepsilon^{-k}|=|\delta_\varepsilon|^{1-k/n}\longrightarrow0\). Consequently,
\begin{equation}
Q_{j,\varepsilon}\longrightarrow z^n
\quad\text{locally uniformly on }\C,
\text{ together with all derivatives.}
\label{eq:general-polynomial-convergence}
\end{equation}

Put
\[
U_{j,\varepsilon}
=\tau_\varepsilon Y_{j,\varepsilon}
=\{z\in\C:|Q_{j,\varepsilon}(z)|<1\}.
\]
For small $\varepsilon$, this is a simply connected domain containing
$0$.  We claim that its Carath\'eodory kernel with respect to $0$ is
$\D$.  Every compact subset of $\D$ is eventually contained in
$U_{j,\varepsilon}$ by
\cref{eq:general-polynomial-convergence}.  If $|z|>1$, then
$|Q_{j,\varepsilon}(z)|>1$ for all sufficiently small
$\varepsilon$, so no such point belongs to the kernel.  Finally, for
every fixed $R_0>1$, uniform convergence on $|z|=R_0$ gives
$|Q_{j,\varepsilon}|>1$ on that circle for small $\varepsilon$.
Since $U_{j,\varepsilon}$ is connected and contains $0$, it is
contained in $\{|z|<R_0\}$.  This proves the claim and supplies the
needed uniform boundedness.

Let $\phi_{j,\varepsilon}:\D\longrightarrow U_{j,\varepsilon}$ be normalized by $\phi_{j,\varepsilon}(0)=0$ and
$\phi_{j,\varepsilon}'(0)>0$.  The Carath\'eodory kernel theorem~\cite[Theorem~1.8]{Pommerenke1992} gives
$\phi_{j,\varepsilon}\longrightarrow\id$ locally uniformly on $\D$.
Take
$\psi_{j,\varepsilon}=\tau_\varepsilon^{-1}\phi_{j,\varepsilon}$.
Then
\[
B_{j,\varepsilon}
=Q_{j,\varepsilon}\circ\phi_{j,\varepsilon}
\longrightarrow z^n
\]
locally uniformly on $\D$.

Write
\[
B_{j,\varepsilon}(z)
=\eta_{j,\varepsilon}
\prod_{\ell=1}^n
\frac{z-\alpha_{j,\ell,\varepsilon}}
{1-\overline{\alpha_{j,\ell,\varepsilon}}z},
\qquad |\eta_{j,\varepsilon}|=1.
\]
For every $0<r_0<1$, Rouch\'e's theorem on $|z|=r_0$ shows that all $n$
zeros lie in $|z|<r_0$ when $\varepsilon$ is small.  Hence $\max_\ell|\alpha_{j,\ell,\varepsilon}|\longrightarrow0$. Evaluation at a fixed nonzero point of $\D$ gives
$\eta_{j,\varepsilon}\to1$.  The poles of the factors are therefore
outside a fixed neighborhood of $\overline\D$ for all sufficiently
small $\varepsilon$, and the products converge there holomorphically
with all derivatives.  Thus
$B_{j,\varepsilon}\to z^n$ in $C^\infty(\overline\D)$.
Differentiating on $\T$ proves
\cref{eq:general-weight-convergence}.
\end{proof}

\subsection{Euclidean realization and noncongruence}

We now complete the construction.  By
\cref{prop:general-coalescence,prop:outer-realization}, after
decreasing $\varepsilon_0$ if necessary, there are conformal embeddings
$F_{j,\varepsilon}:\overline\D\longrightarrow\C$ that extend
holomorphically to a neighborhood of $\overline\D$,
satisfy
\begin{equation}
F_{j,\varepsilon}(0)=0,
\qquad
F_{j,\varepsilon}'(0)>0,
\qquad
|F_{j,\varepsilon}'|=\rho_{j,\varepsilon}
\ \text{on }\T,
\label{eq:general-realizing-maps}
\end{equation}
and obey
\begin{equation}
F_{j,\varepsilon}\longrightarrow n\id
\quad\text{in }C^\infty(\overline\D).
\label{eq:general-map-convergence}
\end{equation}
Set $\Omega_{j,\varepsilon}=F_{j,\varepsilon}(\D)$.  Combining
\cref{eq:general-weighted-isospectrality,prop:conformal-pullback}
gives
\begin{equation}
\SpecS(\Omega_{1,\varepsilon})
=
\SpecS(\Omega_{2,\varepsilon})
\label{eq:general-planar-isospectrality}
\end{equation}
with multiplicities.

We next prove the pairwise noncongruence statement. Suppose that a Euclidean rigid motion $J$ maps $\Omega_{j,\varepsilon}$ onto $\Omega_{k,\varepsilon'}$.  If $J$ preserves orientation, the induced map $\varphi=F_{k,\varepsilon'}^{-1}\circ J\circ F_{j,\varepsilon}$ is a holomorphic automorphism of $\D$.  Preservation of boundary arc length and \cref{eq:general-blaschke-weights,eq:general-realizing-maps} imply $|B_{j,\varepsilon}'|=|(B_{k,\varepsilon'}\circ\varphi)'|$ on $\T$. By \cref{lem:blaschke-boundary-rigidity},
\begin{equation}
B_{j,\varepsilon}
=\eta B_{k,\varepsilon'}\circ\varphi
\label{eq:general-cover-equivalence}
\end{equation}
for some $\eta\in\T$. The critical values in $\D$ of the left-hand side are $\{\varepsilon,\lambda_*\varepsilon\}$, whereas the critical values in $\D$ of the right-hand side are $\{\eta\varepsilon',\eta\lambda_*\varepsilon'\}$. Since $|\lambda_*|>1$, each set has a unique element of smaller
modulus.  Equality of the two sets therefore gives
$\varepsilon=\varepsilon'$ and $\eta=1$.

If $j\neq k$, define
\[
h=\psi_{k,\varepsilon}\circ\varphi
\circ\psi_{j,\varepsilon}^{-1}
:Y_{j,\varepsilon}\longrightarrow Y_{k,\varepsilon}.
\]
Since $\eta=1$, \cref{eq:general-cover-equivalence} becomes
$q_{j,\varepsilon}=q_{k,\varepsilon}\circ h$; it is therefore an
equivalence of the two branched covers over the identity of $\D$.
Their monodromy actions are consequently simultaneously conjugate.
Because $x$ and $y$ generate $G$, this gives an isomorphism of
transitive $G$-sets $G/H_j\simeq G/H_k$, which is equivalent to
conjugacy of $H_j$ and $H_k$.  Since $j\neq k$, this contradicts
the nonconjugacy of $H_1$ and $H_2$ in \cref{lem:group-input}.
Thus an orientation-preserving congruence is possible only when
$(j,\varepsilon)=(k,\varepsilon')$.

If $J$ reverses orientation, the induced $\varphi$ is
antiholomorphic, and \cref{lem:blaschke-boundary-rigidity} gives
\[
B_{j,\varepsilon}(z)
=\eta\,\overline{B_{k,\varepsilon'}(\varphi(z))}.
\]
The critical values in $\D$ of the right-hand side are
\[
\{\eta\varepsilon',
\eta\overline{\lambda_*}\varepsilon'\}.
\]
Comparison with the critical values in $\D$ of the left-hand side
again gives $\varepsilon=\varepsilon'$ and $\eta=1$ by matching
the unique values of smaller modulus.  The remaining equality would
require $\lambda_*=\overline{\lambda_*}$, contrary to
$\lambda_*=2+\iu$. Hence no orientation-reversing congruence
exists, even from one domain to itself.

For a finite Blaschke product of degree $n$, the argument of its
boundary values increases by $2\pi n$.  Therefore
\cref{eq:general-blaschke-weights,eq:general-realizing-maps} give
\begin{equation}
|\partial\Omega_{j,\varepsilon}|
=
\int_0^{2\pi}|B_{j,\varepsilon}'(e^{\iu t})|\dd t
=2\pi n.
\label{eq:general-perimeter}
\end{equation}
We now normalize the scale.  Define
\[
\widehat F_{j,\varepsilon}
=\frac{1}{n}F_{j,\varepsilon},
\qquad
\widehat\Omega_{j,\varepsilon}
=\frac{1}{n}\Omega_{j,\varepsilon}.
\]
A common dilation multiplies every Steklov eigenvalue in both spectra
by the same reciprocal factor, so
\cref{eq:general-planar-isospectrality} is preserved.  Pairwise
noncongruence is also preserved.  The relations in
\cref{eq:general-map-convergence,eq:general-perimeter} give
\[
\widehat F_{j,\varepsilon}\longrightarrow\id
\quad\text{in }C^\infty(\overline\D),
\qquad
|\partial\widehat\Omega_{j,\varepsilon}|=2\pi.
\]

It remains to verify strict convexity.  The boundary parametrizations
$\widehat\gamma_{j,\varepsilon}(t)
=\widehat F_{j,\varepsilon}(e^{\iu t})$
converge in $C^\infty$ to the positively oriented unit circle
$e^{\iu t}$.  Their signed curvatures therefore converge uniformly
to $1$ and are positive for small $\varepsilon$.  Each boundary
consequently encloses a strictly convex domain.

Set
$\Omega_{P,\varepsilon}:=\widehat\Omega_{1,\varepsilon}$ and
$\Omega_{L,\varepsilon}:=\widehat\Omega_{2,\varepsilon}$, and set
$F_{P,\varepsilon}:=\widehat F_{1,\varepsilon}$ and
$F_{L,\varepsilon}:=\widehat F_{2,\varepsilon}$.
All the assertions of \cref{thm:main} now follow.

\section*{Acknowledgments}
The authors would like to express their sincere gratitude to Zuoqin Wang for his valuable comments, which have significantly improved an earlier version of this paper.

\medskip

{\noindent\bf Declaration of AI usage.}
The initial idea of adapting the Sunada construction of
Gordon, Webb, and Wolpert \cite{GordonWebbWolpert1992} to the planar
Steklov problem was proposed by the authors.  Starting from this
strategy, ChatGPT provided substantial assistance in
developing the concrete counterexample construction, including the
passage from the orbifold construction to weighted Steklov problems
on the disk and their subsequent realization by Euclidean plane
domains.  It also assisted with several technical arguments, including
\cref{prop:outer-realization,lem:blaschke-boundary-rigidity,prop:general-coalescence}. The authors subsequently checked, revised,
and rewrote the arguments and take full responsibility for all
mathematical claims in the paper.

\end{document}